\def\Om     {\Omega}
\def\eps     {\varepsilon}
\def\grad    {\nabla }
\def\jnt        {\displaystyle\int}
\def\fra#1#2{\displaystyle\frac{\mathstrut #1}{\mathstrut #2}}
\def\u        {\textit{\textbf{u}}}
\def\x        {\textit{\textbf{x}}}
\def\n        {\textit{\textbf{n}}}
\def\H        {\textit{\textbf{H}}}
\def\L        {\textit{\textbf{L}}}
\def\V        {\textit{\textbf{V}}}
\def\W       {\textit{\textbf{W}}}
\newtheorem{theorem}{Theorem}
\newtheorem{lemma}[theorem]{Lemma}
\newenvironment{proof}{ \textbf{Proof}:}{\hfill}
\newtheorem{definition}[theorem]{Definition}
\begin{document}
\date{}
\title{Convergence to equilibrium of global weak solutions for a Q-tensor problem related to liquid crystals}
\author{Blanca~Climent-Ezquerra and Francisco~Guill\'en-Gonz\'alez\thanks{Departamento de Ecuaciones Diferenciales y An\'alisis Num\'erico and IMUS. Facultad de Matemáticas, Universidad de Sevilla, Spain. E-mails:{\tt bcliment@us.es, guillen@us.es}. 
Partially supported
by MINECO grant MTM2015-69875-P.}}
%
%

\maketitle

\abstract{We study a $Q$-tensor problem modeling the dynamic of nematic liquid crystals  in $3D$ domains. The system consists of the Navier-Stokes equations, with an extra stress tensor depending on the elastic forces of the liquid crystal, coupled with an Allen-Cahn system  fo                r the $Q$-tensor variable.  This problem has a dissipative in time free-energy which leads, in particular, to prove the existence of global in time weak solutions. We analyze the large-time behavior of the weak  solutions. 
By using a Lojasiewicz-Simon's result, we prove the convergence as time goes to infinity of the whole trajectory to a single equilibrium.}

\

\noindent {\bf Keywords:} 
Liquid crystals; Allen-Cahn-Navier-Stokes system; Large-time behavior for dissipative systems.

\section{Introduction}  \label{intro}
We deal with  a generic $Q$-tensor model, following the theory of Landau-De Gennes, in a smooth and bounded domain $\Omega\subset \mathbb{R}^3$, for the
unknowns $(\u,p,Q):(0,T)\times \Omega \rightarrow \mathbb{R}^3 \times \mathbb{R} \times \mathbb{R}^{3\times 3} $, satisfying the momentum and incompressibility equations
\begin{equation}\label{modelo-u}
\left\{\begin{array}{r} D_t \u -\nu \Delta \u +\nabla p  =  \nabla \cdot \tau(Q) + \nabla \cdot \sigma (H,Q) 
\\
\noalign{\vspace{-1ex}}\\
 \nabla \cdot \u =0 
\end{array}\right.\end{equation}
and the $Q$-tensor system:
\begin{equation}\label{modelo-q}
D_t Q -S(\nabla \u, Q) = -\gamma\, {H}(Q) 
\end{equation}
in $\Omega \times (0,T).$

In \eqref{modelo-u} and (\ref{modelo-q}), $D_t=\partial_t  + (\u \cdot \nabla) $ denotes the material time derivative, $\nu>0$ is the viscosity coefficient and
$\gamma>0$ is a material-dependent elastic constant. Moreover, 
\begin{equation}\label{S-term}
S(\nabla \u , Q)= \nabla \u \, Q^t-Q^t \, \nabla \u
\end{equation}
is the so-called stretching term.

In  (\ref{modelo-u}) the tensors $\tau=\tau(Q)\in \mathbb{R}^{3\times 3}$ and
$\sigma=\sigma(H,Q)\in \mathbb{R}^{3\times 3}$ are defined by
\[
\left\{\begin{array}{rcl} \tau_{ij}(Q) &:=& -\varepsilon \,
\left( \partial_j Q :
\partial_i Q  \right)= -\varepsilon \, \partial_j Q_{kl} \, \partial_i Q_{kl} , \\
\noalign{\vspace{-2ex}}\\
\sigma (H,Q) &:=& {H} \, Q-Q \, {H},
\end{array}\right.
\]
where $ \varepsilon >0 $ and the tensor $H=H(Q)$ is related to the variational derivative in $L^2(\Omega)$ of a free
energy functional $E(Q)$, in fact
\begin{equation}\label{estrella}
E(Q):=\displaystyle\frac{\varepsilon}{2} \vert
\nabla Q \vert^2 + F(Q), 
\qquad
\mathcal{E}(Q):=\int_\Omega E(Q)\, dx,
\qquad
H:=\displaystyle\frac{\delta \mathcal{E}(Q)}{\delta Q}.
\end{equation}

Here, we denote $A:B=A_{ij} \, B_{ij}$  the scalar product of matrices (using the Einstein summation convention over repeated indices) and  the potential function $F(Q)$ is defined by
\begin{equation}\label{Fgrande}
F(Q):=\displaystyle\frac{a}{2} \, \vert Q \vert^2
-\displaystyle\frac{b}{3} \, (Q^2 : Q)
+\displaystyle\frac{c}{4} \, \vert Q \vert^4,
\end{equation}
 with $a$, $b\in \mathbb{R}$ and $c>0$. We denote by $\vert Q \vert = (Q:Q)^{1/2}$ the matrix
euclidean norm. 
Then, from (\ref{estrella}) and (\ref{Fgrande})
\begin{equation}\label{H-term}
H=H (Q)= - \varepsilon \, \Delta Q  +f(Q)
\end{equation}
 where
\[
f(Q)=\displaystyle\frac{\partial F}{\partial Q}(Q)=a \, Q - \displaystyle\frac{b}{3} \, \left(
Q^2+QQ^t+Q^tQ \right )+c  \, \vert Q \vert^2 \, Q .
\]
Finally, the system is completed with the
following initial and boundary conditions over
$\Gamma=\partial\Omega$:
\begin{equation}\label{in1}
\u\vert_{t=0} = \u_0, \qquad  Q \vert_{t=0}=Q_0 \quad \mbox{in $\Omega$,}
\end{equation}
\begin{equation}\label{bc1}
\u \vert_{\Gamma}={\bf 0}, \qquad 
 \partial_{\n}Q\vert_{\Gamma}=0 \quad \mbox{in $(0,T)$,}
\end{equation}
where $\n$ denotes the normal outwards vector on the boundary $\Gamma$.

The system (\ref{modelo-u})-(\ref{bc1}) is a simplified version of the following Q-tensor model studied by Paicu \& Zarnescu in \cite{Zarnescu} and Abels et al.~in \cite{Abels2}:
\begin{equation}\label{zm-bounded}
\left\{\begin{array}{rl} D_t \u -\nu \Delta \u +\nabla p  =  \nabla \cdot \tau(Q) + \nabla \cdot \sigma ({H}_{pz},Q) &
\mbox{in $\Omega \times
(0,T)$,}\\
\noalign{\vspace{-1ex}}\\
 \nabla \cdot \u =0 & \mbox{in
$\Omega \times (0,T)$,}\\
\noalign{\vspace{-1ex}}\\
D_t Q -(\W Q-Q\W) = -\gamma\, {H}_{pz}(Q) &
\mbox{in $\Omega \times (0,T)$,}
\end{array}\right.
\end{equation}
complemented with the initial and boundary conditions \eqref{in1}-\eqref{bc1}, 
where $\W$ is the antisymmetric part of $\nabla \u$, that is $\W:=(\nabla \u + (\nabla \u)^t)/2$, and
\[
{H}_{pz}(Q):=- \varepsilon \, \Delta Q  + a \, Q - b \, \left(
Q^2 - \displaystyle\frac{tr(Q^2)}{3} \, \mathbb{I}
\right) + c \, \vert Q \vert^2 \, Q.
\]

The model (\ref{modelo-u})-(\ref{bc1}) was studied in \cite{11},  obtaining also  the  modifications needed to assure  symmetry and traceless of $Q$. In fact, it suffices to replace  $\nabla\u$ by the antisymmetric part $\W=(\nabla\u+\nabla\u^t)/2 $ in the stretching term  $S(\nabla \u , Q)$  defined in \eqref{S-term} and the  $H(Q)$ function given in \eqref{H-term} by $H(Q)+\alpha(Q) \mathbb{I}$ where $\alpha(Q)$ is an appropriate scalar function \cite{11}. 

 These properties of symmetry and traceless are assumed (but not rigorously  justified) in \cite{Zarnescu} and  \cite{Abels2} for the model \eqref{zm-bounded}.  
Since the model (\ref{zm-bounded}) is a particular case of the general model studied in \cite{11}, then any weak solution $(\u,Q)$ of (\ref{zm-bounded}) satisfies that $Q(t)$ is a traceless and symmetric tensor.

By simplicity,  in this paper we consider the model (\ref{modelo-u})-(\ref{bc1}), because it retains  the essential difficulties of a Q-tensor model  like (\ref{zm-bounded}). In fact, the results obtained here can be extended to the Q-tensor model (\ref{zm-bounded}).

The large-time behavior of some  models for Nematic liquid crystals with unknown vector director are studied in \cite{Wu}, \cite{GrasselliWu2} (without  stretching terms) and  in \cite{LiuSun}, \cite{GrasselliWu}, \cite{WuXuLiu} (with stretching terms) and in \cite{PRS} (where different results are deduced depending on considering or not the stretching terms).

On the other hand, the large-time behavior is also analyzed  for others related models, for example in \cite{GalGrasselli} for a Cahn-Hilliard-Navier-Stokes system in $2D$ domains, in \cite{tumor} for a chemotaxis model, and  in \cite{vesicle} and \cite{nu}, where a Cahn-Hilliard-Navier-Stokes vesicle model and a smectic-A liquid crystals model are studied respectively. 

 In \cite{11bis}, some results of local in time regularity and uniqueness of the model (\ref{modelo-u})-(\ref{bc1}) are proved.

Sections 2 and 3 describe the model and the weak solution concept (more details can be seen in \cite{11}). 
The novelty of this paper is in the last two  sections. In Section~\ref{WS},  two precise  energy inequalities are proved via Galerkin Method, a time-integral version for all time $t$ and a time-differential version for almost every time. These inequalities will be essential later and they have neither been proved in \cite{PRS} nor in \cite{CavaterraRocca}.
Section~\ref{sec:time-inftys} is devoted to the study of convergence at infinite time for global weak solutions. In fact, we prove first that the $\omega$-limit set for weak solutions consists of  critical points of the free-energy. Finally, by using a  Lojasiewicz-Simon's result, we demonstrate the convergence of the whole trajectory to a single equilibrium as time goes to infinity. 
\subsection*{Notations}
The notation can be abridged. We set $L^p=L^p(\Om)$,
$p\geq 1$, $H^1=H^1(\Om)$, etc. If $X=X(\Om)$ is a space of
functions defined in the open set $\Om$, we denote by $L^p(0,T;X)$ the
Banach space $L^p(0,T;X(\Omega))$. Also, boldface letters will be used for
vectorial spaces, for instance ${\bf L}^2=L^2(\Omega)^N$, and the type ${\mathbb{L}}^2=\mathbb{L}^2(\Omega)^{N\times N}$ for the tensors. 

We set ${\mathcal V}$ the space formed by all fields $\u\in
C_0^\infty(\Om)^N$ satisfying $\nabla \u=0$. We denote $\H$
(respectively \V) the closure of ${\mathcal V} $ in $\L^2$
(respectively $\H^1$). $\H$ and $\V$ are Hilbert spaces for the
norms $\vert\cdot\vert_2$ and $\Vert\cdot\Vert_1$, respectively.
Furthermore,
\[
\H=\{\u\in \L^2; \:\nabla \u=0,\:
\u\cdot {\textbf{n}}=0 \mbox{ on }\partial\Om\},\quad
 \V=\{\u\in
\H^1;\: \nabla \u=0,\: \u=0  \mbox{ on }\partial\Om\}.
\]

From now on, $C>0$ will denote different constants, depending only
on  data of the problem.
\section{The Landau-De Gennes theory}  \label{model}
Liquid crystals can be seen as an intermediate phase of matter between crystalline solids and isotropic fluids. Nematic
liquid crystals consist of molecules with, for instance, rod-like shape whose center of mass is isotropically
distributed and whose direction is anisotropic, almost constant on average over small regions.  In the Landau-De Gennes theory, the  symmetric and traceless matrix $Q\in \mathbb{R}^{3\times 3}$, known as the Q-tensor order parameter, measures the deviation of the
second moment tensor from its isotropic value. A nematic liquid crystal is said to be isotropic when $Q = 0$, uniaxial when the Q-tensor has two equal non-zero eigenvalues and can be written in the special form:
 \[
 Q= s \left(\n\otimes \n-\frac{1}{3} \, \mathbb{I} \right) \qquad \mbox{with $s\in \mathbb{R} \backslash \{0\}$}, \, \n \in \mathbb{S}^2
 \]
and biaxial when $Q$ has three different eigenvalues and can  be represented as follows:
 \[
Q=s \, \left(\n \otimes \n -\frac{1}{3} \, \mathbb{I}\right) + r \left( {\bf m} \otimes {\bf m} -\frac{1}{3} \, \mathbb{I}\right) 
\]
where
$s, r \in \mathbb{R}; \, \n, {\bf m} \in \mathbb{S}^2.
$

The  definition of the $Q$-tensor is related to the second moment of a
probability measure $\mu(\x,\cdot): \, \mathcal{L}(\mathbb{S}^2)\rightarrow [0,1]$ for each $\x\in \Omega$, being $\mathcal{L}(\mathbb{S}^2)$ the family of Lebesgue
measurable sets on the unit sphere. For any $A\subset \mathbb{S}^2$, $\mu(\x,A)$ is the probability that
the molecules with centre of mass in a very small neighborhood of the point $\x \in \Omega$ are pointing in a direction
contained in $A$.
This probability must satisfy $\mu(\x,A)=\mu(\x,-A)$ in order to reproduce the so-called
``head-to-tail'' symmetry.
As a consequence, the first moment of the probability measure vanishes, that is
\[
\langle p \rangle (\x)=\displaystyle\int_{\mathbb{S}^2} p_i \, d\mu(\x,p)=0.
\]
Then, the main information on $\mu$ comes from the second moment tensor
\[
M(\mu)_{ij}=\displaystyle\int_{\mathbb{S}^2} p_i \, p_j \,
d\mu(p),\quad i,j=1,2,3.
\]
 It is easy to see that
$M(\mu)=M(\mu)^t$ and $tr(M)=1$. If the orientation of the
molecules is equally distributed, then the distribution is
isotropic and $\mu=\mu_0$, $d\mu_0(p)=\frac{1}{4\pi} \, dA$
and $M(\mu_0)=\frac{1}{3} \, \mathbb{I}$.  The deviation of the
second moment tensor from its isotropic value is therefore measured as:
\[
Q=M(\mu)-M(\mu_0)=\displaystyle\int_{\mathbb{S}^2}
\left(p\otimes p - \displaystyle\frac{1}{3}\,
\mathbb{I}\right) \, d\mu(p).
\]
From this equality, $Q$ is  symmetric and traceless. 
\section{Weak solutions} \label{se:weak}

\begin{definition}[Weak solution]\label{dweak}
It will be said that $(\u,Q)$ is a weak solution in $(0,+\infty)$
of  problem (\ref{modelo-u})-(\ref{bc1}) if:
\begin{equation}\label{wr}
\left\{\begin{array}{l}
\u  \in  L^{\infty}(0,+\infty;{\bf H}) \cap L^2(0,+\infty;{\bf V}), \\
[2mm]
Q  \in  L^{\infty}(0,+\infty;\mathbb{H}^1(\Omega)) \cap
L^2(0;T;\mathbb{H}^2(\Omega)) \quad \forall T>0, 
\end{array}\right.
\end{equation}
and satisfies the variational formulation (\ref{cero}) and
(\ref{doblecero}) (defined below), the initial conditions (\ref{in1}) and the boundary conditions (\ref{bc1}). 
\end{definition}

Note that the regularity imposed in (\ref{wr}) is satisfied up to infinite time excepting the  $\mathbb{H}^2(\Omega)$-regularity for $Q$.

In  \cite{11} the following result is proved by means of a Galerkin approximation.
\begin{theorem}[Existence of weak solutions] \label{Th:weak}
If  $(\u_0,Q_0)\in \H\times \mathbb{H}^1(\Omega)$,  there exists a weak solution $(\u,Q)$ of  system
(\ref{modelo-u})-(\ref{bc1}) in $(0,+\infty)$.
\end{theorem}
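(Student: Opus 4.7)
The plan is to construct a sequence of Galerkin approximations, derive uniform a priori bounds from the dissipative free-energy structure, and then pass to the limit using compactness. For the velocity I would use the basis of eigenfunctions $\{\mathbf{w}_k\}$ of the Stokes operator in $\V$, and for the $Q$-tensor the basis $\{\phi_k\}$ of eigenfunctions of $-\Delta+I$ with homogeneous Neumann boundary condition. Look for $\u^m=\sum_{k=1}^m \alpha_k(t)\mathbf{w}_k$ and $Q^m=\sum_{k=1}^m \beta_k(t)\phi_k$ solving the natural projected ODE system, whose local-in-time solvability follows from Cauchy-Lipschitz; one then extends globally in time via the a priori estimates below.

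The cornerstone is the dissipative structure. Testing the approximate momentum equation with $\u^m$ and the approximate $Q$-equation with $H^m=-\eps\Delta Q^m + f(Q^m)$, and using that $\mathcal{E}'(Q)=H$, the key observation is the cancellation of three couples of nonlinear terms: (i) the transport contribution $\int_\Om(\u^m\cdot\grad Q^m):H^m$ coming from $D_t Q^m$ against $\int_\Om \grad\cdot\tau(Q^m)\cdot\u^m$, using $\tau_{ij}=-\eps\,\partial_j Q:\partial_i Q$; (ii) the stretching term $\int_\Om S(\grad\u^m,Q^m):H^m$ against $\int_\Om \grad\cdot\sigma(H^m,Q^m)\cdot\u^m$, using the cyclic identity $(AQ^t-Q^tA):B=A:(BQ-QB)$; (iii) the usual Navier-Stokes convective term $\int_\Om((\u^m\cdot\grad)\u^m)\cdot\u^m=0$. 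This yields the energy identity
\begin{equation*}
\frac{d}{dt}\Big[\tfrac{1}{2}|\u^m|_{{\bf L}^2}^2+\mathcal{E}(Q^m)\Big]+\nu\,\|\grad\u^m\|_{{\bf L}^2}^2+\gamma\,\|H^m\|_{\mathbb{L}^2}^2=0,
\end{equation*}
which, since $F(Q)\ge -C$ and $c>0$ controls the quartic term, gives $\u^m$ bounded in $L^\infty(0,\infty;\H)\cap L^2(0,\infty;\V)$, $Q^m$ bounded in $L^\infty(0,\infty;\mathbb{H}^1)$, and $H^m$ bounded in $L^2(0,\infty;\mathbb{L}^2)$. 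Since $|f(Q)|\lesssim |Q|+|Q|^3$ and $\mathbb{H}^1\hookrightarrow \mathbb{L}^6$ in 3D, $f(Q^m)$ is bounded in $L^\infty(0,\infty;\mathbb{L}^2)$, hence $\Delta Q^m$ is bounded in $L^2(0,T;\mathbb{L}^2)$, which together with $\partial_\n Q^m|_\Gamma=0$ and elliptic regularity gives $Q^m$ bounded in $L^2(0,T;\mathbb{H}^2)$ for every $T>0$. Bounds for $\partial_t \u^m$ in $L^{4/3}(0,T;\V')$ and for $\partial_t Q^m$ in $L^2(0,T;\mathbb{L}^{3/2})$ follow from the equations.

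With these bounds, Aubin--Lions supplies strong convergences $\u^m\to\u$ in $L^2(0,T;{\bf L}^2)$ and $Q^m\to Q$ in $L^2(0,T;\mathbb{H}^1)$ (along a subsequence), together with weak convergences in the spaces given by (\ref{wr}) and weak-* convergences in $L^\infty$. These are enough to pass to the limit in every nonlinear term: in the Navier-Stokes convective term as usual, in $\tau(Q^m)$ (quadratic in $\grad Q^m$) using strong $L^2(\mathbb{H}^1)$ convergence, in $f(Q^m)$ using a.e.\ convergence plus the dominated/Vitali argument with the $L^\infty(\mathbb{L}^6)$ bound, and in $\sigma(H^m,Q^m)$ and $S(\grad\u^m,Q^m)$ by combining weak convergence of $H^m$ (resp.\ $\grad\u^m$) in $L^2(\mathbb{L}^2)$ with strong convergence of $Q^m$ in $L^2(\mathbb{L}^p)$ for any $p<\infty$. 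The initial conditions are recovered from continuity in time of $\u$ and $Q$ with values in weak topologies. The main obstacle is the pair $\sigma(H,Q)$-$S(\grad\u,Q)$: individually $H^m$ and $\grad\u^m$ are only weakly convergent, so the passage to the limit requires strong compactness of $Q^m$ in a space where the product is defined, namely $L^2(0,T;\mathbb{L}^\infty)$ would be ideal, but $\mathbb{H}^2\hookrightarrow\mathbb{L}^\infty$ combined with the $L^2(\mathbb{H}^2)$ bound and the Aubin-Lions compactness in $L^2(\mathbb{H}^{2-\delta})$ for small $\delta>0$ does the job; this is precisely where the $\mathbb{H}^2$-regularity of $Q$ is indispensable.
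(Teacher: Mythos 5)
Your proposal is correct and follows essentially the same route as the paper, which does not reprove this theorem but cites the Galerkin construction of Guill\'en-Gonz\'alez and Rodr\'{\i}guez-Bellido and then records exactly the formal energy law, the cancellations (i)--(iii), the resulting a priori bounds, and the elliptic-regularity bootstrap to $L^2(0,T;\mathbb{H}^2)$ that you describe. The only detail worth tightening is that at the discrete level $H^m=-\eps\Delta Q^m+f(Q^m)$ is not in the Galerkin span (because of $f(Q^m)$), so one must test with the projection $P_mH^m$, obtain the dissipation $\|P_mH^m\|_{\mathbb{L}^2}^2$, and recover the bound on $H^m$ itself from the uniform $L^\infty(0,\infty;\mathbb{L}^2)$ bound on $f(Q^m)$.
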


\subsection*{Variational formulation}\label{vf1}
Taking into account that $\partial_i F(Q)=F'(Q): \partial_i
Q=f(Q): \partial_i Q$, the term of the symmetric tensor $\tau(Q)$ can be
rewritten as:
\[
(\nabla \cdot \tau(Q))_i  =  
H(Q): \partial_i Q - \partial_i \left(F(Q)
+\displaystyle\frac{\varepsilon}{2} \,  \vert \nabla Q
\vert^2  \right) ,
\]
where $\vert \nabla Q \vert^2=\partial_j Q : \partial_j Q $. Then, testing (\ref{modelo-u}) by any  $\widetilde{\u}:\Omega\to \mathbb{R}^3$ with $\widetilde{\u}\vert_{\partial\Omega} ={\bf 0}$ and $\nabla \cdot\widetilde{\u} =0$ in $\Omega$, we
arrive at the following variational formulation of (\ref{modelo-u}):
\begin{equation}\label{cero}
\begin{array}{c}
 (D_t\u,\widetilde{\u})  + \nu (\nabla \u, \nabla \widetilde{\u})
- ((\widetilde{\u} \cdot \nabla)Q, H)
+ (\sigma(H,Q),\nabla \widetilde{\u})=0.
 \end{array}
\end{equation}

On the other hand, testing (\ref{modelo-q}) by any $\widetilde{H}$ and the system
$-\varepsilon \, \Delta Q +f(Q)=H$ by any $\widetilde{Q}$, we arrive
at the variational formulation:
\begin{equation}\label{doblecero}
\left\{\begin{array}{l} (\partial_t Q,\widetilde{H}) + ((\u \cdot \nabla ) Q, \widetilde{H})
-(S(\nabla \u,Q),\widetilde{H})+ \gamma \, (H, \widetilde{H})  = 0 ,\\[3mm]
\varepsilon \, (\nabla Q, \nabla
\widetilde{Q})+(f(Q),\widetilde{Q})-(H,\widetilde{Q})=0,
\end{array}\right.
\end{equation}
for any $\widetilde{H}$, $\widetilde{Q}:\Omega\to \mathbb{R}^{3\times 3}$.
From (\ref{doblecero}), one has in particular:
\begin{equation} \label{doblecero-a}
\begin{array}{c}
(\partial_t Q, \widetilde{Q})+(
(\u \cdot \nabla) Q, \widetilde{Q}
)-(S(\nabla \u,Q),\widetilde{H}) 
-\varepsilon \, \gamma \, (\Delta Q,\widetilde{Q}) + \gamma \, (
f(Q),\widetilde{Q}
)=0.
\end{array}
\end{equation}
On the other hand, by applying regularity \eqref{wr} to the systems \eqref{cero} and \eqref{doblecero-a}, one has 
\[
\partial_t\u \in L^{4/3}_{loc}([0,+\infty);{\bf V}')\quad\mbox{and}\quad
\partial_t Q \in L^{4/3}_{loc}([0,+\infty);\mathbb{L}^{2}(\Omega)),
\]
hence, the following time-continuity can be deduced:
\[
\u\in C([0,+\infty);{\bf V}')\cap C_w([0,+\infty);{\bf H}),\ 
Q\in C([0,+\infty);\mathbb{L}^{2}(\Omega))\cap C_w([0,+\infty);\mathbb{H}^{1}).
\]

In particular, the initial conditions (\ref{in1}) have sense.
\subsection*{Dissipative energy law and global in time a priori estimates}\label{ss-le}

Now, we argue in a formal manner, assuming a regular enough solution $(\u,p,Q)$ of 
(\ref{modelo-u})-(\ref{bc1}).

By taking $\widetilde{\u}=\u$ in (\ref{cero}) and $(\widetilde{H},\widetilde{Q})=(H,\partial_t Q)$ in (\ref{doblecero}) 
then the stretching term cancels with the term dependent on the tensor $\sigma(H,Q)$, the term $((\u \cdot \nabla)Q, H)$ appearing in both (\ref{cero}) and (\ref{doblecero}) also cancel and the convection term $((\u \cdot\nabla) \u,\u)$ vanishes, hence the following ``energy equality" holds:
\begin{equation}\label{leyenergiapre}
\frac{d}{dt} \left(
\displaystyle\frac{1}{2} \Vert \u \Vert_{{\bf L}^2(\Omega)}^2 + \displaystyle\int_{\Omega}E(Q) \, d\x
\right) + \nu \Vert \nabla \u \Vert_{\mathbb{L}^2}^2 + \gamma \Vert {H} \Vert_{\mathbb{L}^2}^2 = 0 .
\end{equation}

For the moment, bounds for $(\u,Q)$ are not guaranteed from (\ref{leyenergia}) because $ \displaystyle\int_{\Omega}
E(Q) \, d\x$ is not a positive term due to $F(Q)$. However,
it is possible to find a large enough constant $\mu>0$ depending on parameters $a$, $b$ and $c$ given in the  definition of $F(Q)$ in (\ref{Fgrande}),  such that 
\begin{equation}\label{estrella11}
F_{\mu}(Q):=F(Q)+\mu \ge \displaystyle\frac{c}{8} \,
\vert Q \vert^4.
\end{equation}
By replacing $E(Q)$  in (\ref{leyenergiapre})  by
\[
E_{\mu}(Q):=\displaystyle\frac{1}{2} \vert \nabla
Q \vert^2 + F_{\mu}(Q)  \ge 0,
\]
and denoting the kinetic and phase energies as
 \[
 \mathcal{E}_k(\u(t)):=\fra{1}{2}\Vert\u\Vert_{{\bf L}^2}^2\quad \mbox{ and }\quad
\mathcal{E}_\mu(Q):=\displaystyle\int_{\Omega} E_\mu(Q) \, d\x
\]
and the total energy as 
 \[\mathcal{E}(\u,Q):=\mathcal{E}_k(\u) +  \mathcal{E}_\mu(Q) ,
 \]
 then (\ref{leyenergiapre}) implies 
\begin{equation}\label{leyenergia}
\frac{d}{dt}\mathcal{E}(\u(t),Q(t)) + \nu \Vert \nabla \u \Vert_{{\bf L}^2}^2 + \gamma \Vert {H} \Vert_{\mathbb{L}^2}^2 = 0 .
\end{equation}
This energy equality shows the dissipative character of the model with respect to the total free-energy $\mathcal{E}(\u(t),Q(t))$. 
In fact, assuming finite total energy of initial data, i.e.
\[
 \displaystyle\int_{\Omega} E_{\mu}(Q_0) \, d\x +
\displaystyle\frac{1}{2} \Vert \u_0 \Vert_{{\bf L}^2(\Omega)}^2 < +\infty ,
\]
 then the following estimates hold:
\begin{equation}\label{reg-deb}
\begin{array}{c} 
\u  \in   L^{\infty}(0,+\infty;\L^2(\Omega))  \cap L^2(0,+\infty;{\bf {H}}^1(\Omega)),\\[3mm]
\nabla Q  \in  L^{\infty}(0,+\infty;\mathbb{L}^2(\Omega)),
\quad
{F}_{\mu}(Q) \in  L^{\infty}(0,+\infty;L^1(\Omega)),
\\[3mm]
{H} \in  L^2(0,+\infty;\mathbb{L}^2(\Omega)).
\end{array}
\end{equation}
In particular, from (\ref{estrella11}) and (\ref{reg-deb}),
we deduce the regularity:
\[
\begin{array}{l}
Q \in  L^{\infty}(0,+\infty; \mathbb{L}^4(\Omega)) \quad \mbox{and} \quad Q \in
L^{\infty}(0,+\infty;\mathbb{H}^1(\Omega)),
\end{array}
\]
hence, in particular
\begin{equation}\label{estrella2}
Q \in L^{\infty}(0,+\infty;\mathbb{L}^6(\Omega)).
\end{equation}
Since $f(Q)$ is  a third order polynomial function,
\[
\vert {f}(Q)\vert \le C(a,b,c) \, \left( \vert Q \vert + \vert Q \vert^2 +
\vert Q \vert^3 \right)
\]
which, together with (\ref{estrella2}), gives ${f}(Q)\in L^{\infty}(0,+\infty;\mathbb{L}^2(\Omega))$.
Then, using that ${H}(Q)=-\varepsilon \, \Delta Q +
{f}(Q)$, we obtain:
\[
\Delta Q \in
L^{\infty}(0+\infty;\mathbb{L}^2(\Omega)) +
L^2(0,+\infty;\mathbb{L}^2(\Omega))\]
hence
\[ 
\Delta Q \in
L^2(0,T;\mathbb{L}^2(\Omega)) \quad \forall T>0.
\]
Finally, by using the $H^2$-regularity of the Poisson problem:
\[
\left\{\begin{array}{rcl}
- \varepsilon\, \Delta Q + Q &=& f(Q)+Q \quad \mbox{in $\Omega$,}\\[1mm]
\partial_\n Q \vert_{\Gamma} &=& {0} 
\end{array}\right.
\]
we deduce that:
\[
 Q \in
L^2(0,T;\mathbb{H}^2(\Omega)) \quad \forall \,T>0.
\]
\section{Two improved energy inequalities}\label{WS}
Now, we are in order to prove the following technical lemma.
 \begin{lemma} \label{weakdef} 
 Let $(\u,Q)$ be a weak solution  in $(0,+\infty)$
of  problem (\ref{modelo-u})-(\ref{bc1}) furnished by a 
 Galerkin approximation. Then, $(\u,Q)$ satisfies the following energy inequality a.e.~$t_1, t_0: t_1\geq t_0\geq 0$:
  \begin{equation}\label{energyeqint0}
\mathcal{E}(\u(t_1),Q(t_1))-\mathcal{E}(\u(t_0),Q(t_0))
+ \jnt_{t_0}^{t_1}(\nu\|\grad{\u}(s)\|^2_{\mathbb{L}^2} +
\gamma\|H(s)\|^2_{\mathbb{L}^2})\;ds  \leq 0.
\end{equation}

 Moreover, there exists a special function $\widetilde{\mathcal{E}}=\widetilde{\mathcal{E}} (t)\in \mathbb{R}$ defined for all $t\geq 0$, which satisfies the following integral inequality for all $t_1, t_0: t_1\geq t_0\geq 0$:
 \begin{equation}\label{energyeqint}
\widetilde{\mathcal{E}}(t_1)-\widetilde{\mathcal{E}}(t_0)
+ \jnt_{t_0}^{t_1}(\nu\|\grad{\u}(s)\|^2_{\mathbb{L}^2} +
\gamma\|H(s)\|^2_{\mathbb{L}^2})\;ds  \leq 0,
\end{equation}
  and the following differential version a.e. $t\ge 0$:
 \begin{equation}\label{energy-eqs}
  \qquad\displaystyle \frac{d}{dt}\widetilde{\mathcal{E}}(t)+ \nu\|\grad{\u}(t)\|^2_{\mathbb{L}^2} +\gamma\|H(t)\|^2_{\mathbb{L}^2}  \leq 0.
  \end{equation}
 \end{lemma}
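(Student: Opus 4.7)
My plan is to lift the formal calculation leading to \eqref{leyenergia} to the Galerkin level, where it becomes an actual identity, and then pass to the limit in two distinct ways: once for almost every pair of times, in order to get \eqref{energyeqint0}, and once via Helly's selection theorem, in order to produce a non-increasing function $\widetilde{\mathcal{E}}$ defined everywhere that delivers \eqref{energyeqint}--\eqref{energy-eqs}. Testing the Galerkin version of \eqref{cero} with $\widetilde{\u}=\u_n$, the first equation of \eqref{doblecero} with $\widetilde{H}=H_n$, and the second equation with $\widetilde{Q}=\partial_t Q_n$ (all admissible inside the finite-dimensional subspace), exactly the same cancellations used formally yield the exact identity
\[
\mathcal{E}(\u_n(t_1),Q_n(t_1))-\mathcal{E}(\u_n(t_0),Q_n(t_0))+\int_{t_0}^{t_1}\bigl(\nu\|\nabla\u_n\|_{\mathbb{L}^2}^2+\gamma\|H_n\|_{\mathbb{L}^2}^2\bigr)\,ds=0
\]
for every $0\le t_0\le t_1$. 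In particular the map $f_n(t):=\mathcal{E}(\u_n(t),Q_n(t))$ is non-increasing.

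To obtain \eqref{energyeqint0}, I apply Aubin--Lions to the uniform bounds \eqref{reg-deb}, extracting a subsequence along which $\u_n\to\u$ in $L^2_{\mathrm{loc}}(0,+\infty;\L^2)$ and $Q_n\to Q$ in $L^2_{\mathrm{loc}}(0,+\infty;\mathbb{H}^1)$ strongly. Hence for a.e.\ $t$, $\u_n(t)\to\u(t)$ in $\L^2$ and $Q_n(t)\to Q(t)$ in $\mathbb{H}^1$. Combined with the Sobolev embedding $\mathbb{H}^1\hookrightarrow\mathbb{L}^p$ for $p\le 6$ and the polynomial form of $F$, this yields $f_n(t)\to\mathcal{E}(\u(t),Q(t))$ for a.e.\ $t$. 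Fixing any two such regular times $t_0\le t_1$ and passing to the limit in the Galerkin identity — pointwise at the two endpoints, and by weak lower semicontinuity for the dissipation integrals (using $\u_n\rightharpoonup\u$ in $L^2(0,T;\V)$ and $H_n\rightharpoonup H$ in $L^2(0,T;\mathbb{L}^2)$) — yields \eqref{energyeqint0}.

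For the everywhere version, the non-increasing functions $f_n$ are uniformly bounded (above by $f_n(0)\to\mathcal{E}(\u_0,Q_0)$, below by $0$ since $E_\mu\ge 0$), so by Helly's selection theorem a further subsequence converges pointwise on $[0,+\infty)$ to a non-increasing function $\widetilde{\mathcal{E}}:[0,+\infty)\to\mathbb{R}$; by the previous step, $\widetilde{\mathcal{E}}(t)=\mathcal{E}(\u(t),Q(t))$ for a.e.\ $t$. For \emph{any} $t_0\le t_1$, passing to the limit in the Galerkin identity now gives
\[
\widetilde{\mathcal{E}}(t_1)-\widetilde{\mathcal{E}}(t_0)=-\lim_n\int_{t_0}^{t_1}\bigl(\nu\|\nabla\u_n\|^2+\gamma\|H_n\|^2\bigr)\,ds\le-\int_{t_0}^{t_1}\bigl(\nu\|\nabla\u\|^2+\gamma\|H\|^2\bigr)\,ds
\]
by weak lower semicontinuity, i.e.\ \eqref{energyeqint}. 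Since $\widetilde{\mathcal{E}}$ is monotone and bounded, it is differentiable a.e.; taking $t_0=t$, $t_1=t+h$ in \eqref{energyeqint}, dividing by $h>0$, and letting $h\to 0^+$ at any joint differentiability point of $\widetilde{\mathcal{E}}$ and Lebesgue point of the dissipation integrand produces \eqref{energy-eqs}.

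The main obstacle is constructing a single $\widetilde{\mathcal{E}}$ that is defined at \emph{every} $t\ge 0$ and satisfies the inequality there, not merely a.e. This rests essentially on the monotonicity of the Galerkin energies $f_n$, which makes Helly's theorem applicable; without this structural input one could only obtain an a.e.\ inequality. A related technical nuance is that passing to the limit in the dissipation produces lower (not upper) semicontinuity, which fortunately goes in the correct direction for the desired inequality.
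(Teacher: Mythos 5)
Your proposal is correct and follows the paper's overall strategy — Galerkin energy identity, a.e.\ pointwise convergence of the energies, and weak lower semicontinuity of the dissipation to get \eqref{energyeqint0} — but it departs genuinely from the paper in how the everywhere-defined function $\widetilde{\mathcal{E}}$ is built. The paper never returns to the Galerkin sequence for this step: it observes that \eqref{energyeqint0} makes $t\mapsto\mathcal{E}(\u(t),Q(t))$ decreasing on $[0,+\infty)\setminus N$ for a null set $N$, and defines $\widetilde{\mathcal{E}}(t)$ as the left limit of that function along points outside $N$, then recovers \eqref{energyeqint} for arbitrary $t_0\le t_1$ by approximating both endpoints from the left by admissible times. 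You instead apply Helly's selection theorem to the uniformly bounded, non-increasing Galerkin energies $f_n$ and pass to the limit in the Galerkin relation directly at every pair of times. Both routes are sound; yours exploits more structure of the approximating sequence (monotonicity of each $f_n$), while the paper's is intrinsic to the limit solution once \eqref{energyeqint0} is known and so would survive any approximation scheme yielding that a.e.\ inequality. A small bonus of your construction is that $\widetilde{\mathcal{E}}=\mathcal{E}(\u(\cdot),Q(\cdot))$ a.e.\ comes out automatically, which is precisely the extra hypothesis the paper later imposes in Theorem~\ref{the:second} (the left-limit construction also yields it, though the paper does not record this in the lemma). Two minor cautions: the Galerkin relation \eqref{energyeqintm} is quoted from \cite{11} as an inequality rather than the exact identity you assert (testing with $\widetilde H=H_m$ and $\widetilde Q=\partial_t Q_m$ requires these to lie in the discrete spaces, which depends on the precise scheme), but only the $\le$ direction and the non-negativity of the dissipation — hence monotonicity of $f_n$ — are used, so your argument is unaffected; and the pointwise a.e.\ convergence $f_n(t)\to\mathcal{E}(\u(t),Q(t))$ and the Helly limit must be taken along one common subsequence, which you do note.
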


\begin{proof}
 To prove (\ref{energyeqint}) we start from the following energy equality satisfied by the Galerkin approximate solutions (see \cite{11}) for all $t, t_0$ with $t\geq t_0\ge 0$:
\begin{equation}\label{energyeqintm}
\mathcal{E}(\u_m(t),Q_m(t))-\mathcal{E}(\u_m(t_0),Q_m(t_0))
+ \jnt_{t_0}^{t}(\nu\|\grad{\u_m}(s)\|^2_{L^2} +
\gamma\|H_m(s)\|^2_{\mathbb{L}^2})\;ds  \leq 0.
\end{equation}
 Moreover, $\u_m(t)$ and $Q_m(t)$ have sufficient estimates to obtain  
\begin{equation}\label{en1}
\begin{array}{c}
\displaystyle\mathcal{E}(\u_m(t),Q_m(t))\rightarrow \mathcal{E}(\u(t),Q(t))\quad 
\hbox{in $L^1(0,T)$, and in particular a.e. $t\geq 0$}.
\end{array}
\end{equation}
Since $ \u_m\to\u$ weakly in $L^2(0,T;\H^1)$ and $ H_m\to H$ weakly in $L^2(0,T;\mathbb{L}^2)$, 
\begin{equation}\label{en4}
\displaystyle\liminf_{m\rightarrow +\infty}\jnt_{t_0}^{t_1}(\nu\|\grad{\u_m}(s)\|^2_{\mathbb{L}^2} +\gamma\|H_m(s)\|^2_{\mathbb{L}^2})\;ds   
\geq
\jnt_{t_0}^{t_1}(\nu\|\grad{\u}(s)\|^2_{\mathbb{L}^2} +
\gamma\|H(s)\|^2_{\mathbb{L}^2})\;ds 
\end{equation}
for all $t_1, t_0: t_1\geq t_0\ge 0$.

By taking $\liminf_{m\rightarrow +\infty}$ in (\ref{energyeqintm}), we obtain that for all $t_1\geq t_0\geq 0$,
\begin{equation}\label{energlim}
\begin{array}{c}
\displaystyle\liminf_{m\rightarrow +\infty}\mathcal{E}(\u_m(t),Q_m(t))+
\displaystyle\liminf_{m\rightarrow +\infty} \jnt_{t_0}^{t_1}(\nu\|\grad{\u_m}(s)\|^2_{L^2} +
\gamma\|H_m(s)\|^2_{\mathbb{L}^2})\;ds 
\\[4mm]\leq 
\displaystyle \limsup_{m\rightarrow +\infty}\mathcal{E}(\u_m(t_0),Q_m(t_0)).
\end{array}
\end{equation}
By using (\ref{en1}) and (\ref{en4}) in (\ref{energlim}), we obtain \eqref{energyeqint0}.


 
 On the other hand, since the inequality (\ref{energyeqint0}) is satisfied for all $t_0, t_1\in [0,+\infty)\backslash N$, where $N$ is a set of null Lebesgue measure, then the map $t\in  [0,+\infty)\backslash N\to \mathcal{E}(\u(t),Q(t))\in \mathbb{R}$ is a real decreasing (and bounded) function. The, we can define a special  function $\widetilde{\mathcal{E}}(t)$ for all $t\in [0,+\infty)$ as:
\[
\widetilde{\mathcal{E}}(0):=\mathcal{E}(u_0,Q_0), \qquad \widetilde{\mathcal{E}}(t):=\lim_{\stackrel{\:\:\scriptstyle s\to t^-} {\scriptstyle s \in [0,+\infty)\setminus N}}\mathcal{E}(\u(s),Q(s)).
\]
This function $\widetilde{\mathcal{E}}$ is ``continuous from the left"  and decreasing for all $t\ge 0$. Indeed, for any $t_1, t_2\in  [0,+\infty)$, for instance $t_1< t_2$, we can choose sequences $\{s^1_n\}, \{s^2_n\} \subset [0,+\infty)\backslash N$ such that $s_n^1\to t^-_1$, $s_n^2\to t^-_2$ and,   $s^1_n \leq s^2_n$ for all  $n\geq n_0$. 
Since $s_n^1$ and  $s_n^2$ are not in $N$, we know that $\mathcal{E}(\u(s_n^1),Q(s_n^1))\geq \mathcal{E}(\u(s_n^2),Q(s_n^2))$. By taking limit as $s_n^1\to t^-_1$ and $s_n^2\to t^-_2$, we obtain that $\widetilde{\mathcal{E}}(t_1)\geq\widetilde{\mathcal{E}}(t_2)$.

Since $\widetilde{\mathcal{E}}(t)$ is decreasing for all $t\in [0,+\infty)$, it is derivable (and absolutely continuous) almost everywhere $t\in (0,+\infty)$.

Since the inequality (\ref{energyeqint0}) is satisfied for all $t_0, t_1 \in [0,+\infty)\setminus N$ where the measure of $N$  is zero, given any $t_0<t_1$, we can take $\delta_n>0$ and $\eta_n>0$ such that $t_0-\delta_n,\:t_1-\eta_n \not\in N$ and $\delta_n,\eta_n\to 0$, hence 
\[
\widetilde{\mathcal{E}}(t_1-\eta_n)-\widetilde{\mathcal{E}}(t_0-\delta_n)
+ \jnt_{t_0-\delta_n}^{t_1-\eta_n}(\nu\Vert\grad{\u}(s)\Vert^2_{L^2} +
\gamma\Vert\grad H(s)\Vert^2_{L^2})\;ds  \leq 0.
\]
By taking $\delta_n\to 0$ and $\eta_n\to 0$, we obtain (\ref{energyeqint}).

In particular,   
by choosing $t_0=t$ and $t_1=t+h$ in (\ref{energyeqint}), we obtain
 \begin{equation}\label{interm}
\fra{\widetilde{\mathcal{E}}(t+h)-\widetilde{\mathcal{E}}(t)}{h} +\fra{1}{h} \jnt_{t}^{t+h}(\nu\Vert\grad{\u}(s)\Vert^2_{L^2} +
\gamma\Vert\grad H(s)\Vert^2_{L^2})\;ds  \leq 0,\quad \forall\, t,h\ge 0.
\end{equation}
Observe that
\[
\lim_{h\to 0}\fra{1}{h} \jnt_{t}^{t+h}(\nu\Vert\grad{\u}(s)\Vert^2_{L^2} +
\gamma\Vert\grad H(s)\Vert^2_{L^2})\;ds  = \nu\Vert\grad{\u}(t)\Vert^2_{L^2} +
\gamma\Vert\grad H(t)\Vert^2_{L^2},
\]
a.e.~$t\ge 0$
because the map, $s\in [0,+\infty)\to \nu\Vert\grad{\u}(s)\Vert^2_{L^2} +
\gamma\Vert\grad H(s)\Vert^2_{L^2}\in \mathbb{R}$, belongs to $L^1(0,+\infty)$. 
Accordingly, by taking $h\to 0$ in (\ref{interm}), we obtain (\ref{energy-eqs}) a.e.~$t\ge 0$.
\hfill $\square$
\end{proof}
\section{Convergence at infinite time. } \label{sec:time-inftys}
Let $(\u,Q)$ be  a weak solution of (\ref{modelo-u})-(\ref{bc1}) in $(0,+\infty)$ associated to an  initial data $(\u_0,Q_0)\in \H\times \mathbb{H}^1(\Omega)$  (see Definition~\ref{dweak}) satisfying Lemma~\ref{weakdef}.  
From the energy inequality (\ref{energyeqint0}), there exists a real number $E_\infty\geq 0$ such that the total energy evaluated in the trajectory $(\u(t),Q(t))$ satisfies 
\begin{equation}\label{asenergys}
\mathcal{E}(\u(t),Q(t))\searrow E_\infty\mbox{ in }\mathbb{R} \quad\mbox{ as } t\uparrow +\infty.
\end{equation}
Let us define the  $\omega$-limit set of  this  global weak solution $(\u,Q)$  as follows:
\[ 
\begin{array}{l}
\omega(\u,Q)=\{(\u_\infty,Q_\infty)\in \H\times \mathbb{H}^1: \exists \{t_n\}\uparrow+\infty \mbox{ s.t.\ }
\\[3mm]\quad
({\u}(t_n),{Q}(t_n))\rightarrow(\u_\infty,Q_\infty) \mbox{ weakly in } \L^2\times \mathbb{H}^1\}.
 \end{array}
\] 
Let ${\cal S}$ be the set of critical points of the energy $\mathcal{E}(Q)$ defined in (\ref{estrella}), that is
\[ 
\quad{\cal S}=\{Q\in \mathbb{H}^2:  -\eps \Delta Q+f(Q)=0 \mbox{ in }\Omega, \:\partial_n Q\vert_{\Gamma}=0  \}.
\] 
\begin{theorem} \label{the:first}
Assume that $(\u_0,Q_0)\in \H\times \mathbb{H}^1$. Fixed $(\u,Q)$  a weak solution of (\ref{modelo-u})-(\ref{bc1}) in $(0,+\infty)$ satisfying Lemma~\ref{weakdef},
then $\omega(\u,Q)$ is  nonempty  and $\omega(\u,Q)\subset \{0\}\times {\cal S}$. Moreover, for any  $ Q_\infty \in {\cal S}$ such that $(0,Q_\infty)\in \omega(\u,Q)$, it holds  
\[
\mathcal{E}_\mu(Q_\infty)=E_\infty.
\]
 In particular, $\u(t)\rightarrow 0$ weakly in $\L^2$ and $\mathcal{E}_\mu(Q(t))\rightarrow\mathcal{E}_\mu(Q_\infty)$ in $\mathbb{R}$ as $t\uparrow +\infty$.
\end{theorem}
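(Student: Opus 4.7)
The plan is a time-translation argument along any sequence $t_n\uparrow +\infty$, upgrading weak subsequential limits via Aubin--Lions compactness together with the elliptic identity $H=-\eps\Delta Q+f(Q)$. From the uniform bounds \eqref{reg-deb}, $\u(t)$ is bounded in $\H$ and $Q(t)$ in $\mathbb{H}^1$, so Banach--Alaoglu gives $\omega(\u,Q)\neq\emptyset$. Fix $(\u_\infty,Q_\infty)\in\omega(\u,Q)$ realized by a sequence $t_n\to+\infty$, pick $T>0$, and work with the translates $\u_n(s):=\u(t_n+s)$, $Q_n(s):=Q(t_n+s)$, $H_n(s):=H(t_n+s)$ on $[0,T]$. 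Since $\|\grad\u\|^2_{\mathbb{L}^2}$ and $\|H\|^2_{\mathbb{L}^2}$ lie in $L^1(0,+\infty)$ by \eqref{reg-deb}, the tails vanish: $\grad\u_n\to 0$ and $H_n\to 0$ strongly in $L^2(0,T;\mathbb{L}^2)$, and $\u_n\to 0$ in $L^2(0,T;\L^2)$ by Poincar\'e.

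To identify $\u_\infty=0$, I combine $\u_n\to 0$ in $L^2(0,T;\V)$ with the bound $\partial_t\u_n\in L^{4/3}(0,T;\V')$ noted in the excerpt and apply Aubin--Lions to obtain strong compactness in $C([0,T];\V')$; evaluating at $s=0$ yields $\u(t_n)\to 0$ in $\V'$, which combined with the weak $\L^2$-convergence forces $\u_\infty=0$. To identify $Q_\infty\in \mathcal{S}$, I use that $(Q_n)$ is bounded in $L^\infty(0,T;\mathbb{H}^1)\cap L^2(0,T;\mathbb{H}^2)$, while from the $Q$-equation \eqref{doblecero-a} together with $\u_n,H_n\to 0$ the time derivative $\partial_t Q_n$ tends to $0$ in a suitable negative-order norm. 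Aubin--Lions then gives strong convergence (along a subsequence) of $Q_n$ in $C([0,T];\mathbb{L}^p)$ for $p<6$ to some $Q_*(s)$, which by the vanishing of $\partial_t Q_n$ is independent of $s$ and whose value at $s=0$ equals $Q_\infty$. Passing to the limit in $-\eps\Delta Q_n+f(Q_n)=H_n$ (using the $\mathbb{L}^6$-bound on $Q_n$ and strong $\mathbb{L}^p$-convergence to handle the cubic term $f(Q_n)\to f(Q_\infty)$) then gives $-\eps\Delta Q_\infty+f(Q_\infty)=0$, the Neumann boundary condition being preserved under the limit; hence $Q_\infty\in\mathcal{S}$.

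For the energy identity $\mathcal{E}_\mu(Q_\infty)=E_\infty$, I test the elliptic equation $-\eps\Delta Q_n+f(Q_n)=H_n$ against $Q_n$, integrate on $\Om\times[0,T]$, and pass to the limit: the right-hand side vanishes because $H_n\to 0$ in $L^2(\mathbb{L}^2)$ and $Q_n$ is bounded in $L^\infty(\mathbb{L}^2)$, while the cubic term converges because $Q_n\to Q_\infty$ strongly in $L^2(\mathbb{L}^4)$. This yields $\int_0^T\eps\|\grad Q_n\|^2_{\mathbb{L}^2}\,ds\to T\eps\|\grad Q_\infty\|^2_{\mathbb{L}^2}$, which combined with weak $\mathbb{H}^1$-convergence delivers strong convergence of $Q_n$ to $Q_\infty$ in $L^2(0,T;\mathbb{H}^1)$ and hence $\int_0^T\mathcal{E}_\mu(Q_n)\,ds\to T\mathcal{E}_\mu(Q_\infty)$. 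Averaging $\mathcal{E}(\u(t),Q(t))\searrow E_\infty$ over $[t_n,t_n+T]$ and using $\int_0^T\|\u_n\|^2_{\L^2}\,ds\to 0$ produces the desired equality.

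The last two assertions then follow by a subsequence argument: every weak $\L^2$-limit of $\u(t_n)$ must be $0$ by the previous step, hence $\u(t)\rightharpoonup 0$ in $\L^2$; and combining $\limsup_{t\to\infty}\mathcal{E}_\mu(Q(t))\le E_\infty$ (from $\mathcal{E}_\mu\le\mathcal{E}$) with the weak $\mathbb{H}^1$-lower semicontinuity $\liminf\mathcal{E}_\mu(Q(t_n))\ge\mathcal{E}_\mu(Q_\infty)=E_\infty$ pins down $\mathcal{E}_\mu(Q(t))\to \mathcal{E}_\mu(Q_\infty)$. The main technical obstacle is the strong $\mathbb{H}^1$-convergence of the translates $Q_n$, which is precisely what the ``multiply-by-$Q_n$'' trick on the elliptic equation provides by converting the weak $\mathbb{H}^1$-limit into a norm-convergence identity via the vanishing of $H_n$.
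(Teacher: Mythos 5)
Your proposal is correct and follows essentially the same route as the paper: time translates $\u_n,Q_n,H_n$ on a unit window, the vanishing of the dissipation integral forcing $\grad\u_n\to 0$ and $H_n\to 0$ strongly, Aubin--Lions compactness to identify $\u_\infty=0$ and a time-independent limit $\overline{Q}=Q_\infty$, and passage to the limit in $-\eps\Delta Q_n+f(Q_n)=H_n$ plus elliptic regularity to conclude $Q_\infty\in{\cal S}$. The one place you go beyond the paper is the identity $\mathcal{E}_\mu(Q_\infty)=E_\infty$, which the paper leaves implicit: your device of testing the elliptic equation against $Q_n$ to upgrade weak $\mathbb{H}^1$-convergence to norm convergence of $\grad Q_n$, and then time-averaging the monotone energy over $[t_n,t_n+T]$, is a clean and correct way to supply that missing step.
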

{\bf Proof: } Observe that since 
\[
(\u,Q)\in L^\infty(0,+\infty;\H\times \mathbb{H}^1),
\]
 for any sequence $\{t_n\}\uparrow +\infty$ there exists a subsequence (equally denoted) and suitable limit functions $(\u_\infty, Q_\infty)\in \H\times \mathbb{H}^1$, such that 
\begin{equation}\label{phiinf}
\quad\:
\u(t_n)\rightarrow \u_\infty\mbox{ weakly in }\H,\:
Q(t_n)\rightarrow Q_\infty\mbox{ weakly in }\mathbb{H}^1 .
\end{equation}
We consider the initial and boundary-value problem associated to (\ref{modelo-u})-(\ref{bc1}) restricted on the time interval $[t_n,t_n+1]$ with initial values $\u(t_n)$ and $Q(t_n)$. If we define 
\[
\u_n(s):=\u(s+t_n),\: Q_n(s):=Q(s+t_n),\: H_n(s):=H(s+t_n)
\]
 for a.e. $s\in [0,1]$,
then, $(\u_n, Q_n)$ is a weak solution to the problem (\ref{modelo-u})-(\ref{bc1}) in the time interval $ [0,1]$.
From the energy inequality (\ref{energyeqint0}), we have that
\[
\begin{array}{c}
 \jnt_0^1( \nu \Vert \nabla \u_n(s) \Vert_{{\bf L}^2}^2 + \gamma \Vert {H}_n(s) \Vert_{\mathbb{L}^2}^2 )\;ds 
=\jnt_{t_n}^{t_{n}+1}(\nu|\Vert \nabla \u(t) \Vert_{{\bf L}^2}^2+
\gamma\Vert H(t)\Vert^2_{\mathbb{L}^2})\;dt
\\[0.3cm]
 \leq\mathcal{E}_\mu(Q(t_n))-\mathcal{E}_\mu(Q(t_n+1))\longrightarrow 0\quad \mbox{ as }n\rightarrow \infty,
\end{array} 
\]
 hence, 
\[
\grad \u_n\rightarrow 0\mbox{ strongly in }L^2(0,1;\L^2) 
\]
 and 
 \[H_n\rightarrow 0\mbox{ strongly in }L^2(0,1;\mathbb{L}^2).
 \]
  In particular, by using Poincar\'e inequality, one has
\[
\u_n\rightarrow 0\:\mbox{ strongly in }L^2(0,1;\V)
\]
and
\[H_n\rightarrow 0\:\mbox{ strongly in }L^2(0,1;\mathbb{L}^2). 
 \]
 Moreover, since $\u_n$ and $\partial_t \u_n$ are bounded in $L^{\infty}(0,1;\H)$ and $L^{4/3}(0,1;\V ')$ respectively, then 
 $\u_n\rightarrow 0$ in  $C([0,1];\V')$. In particular, $\u(t_n)=\u_n(0)\rightarrow0$ in  $\V '$, hence  $\u_\infty=0$ (owing to (\ref{phiinf})). Consequently, the whole trajectory $\u(t)\to 0$ as $t\to +\infty$.

Furthermore,  
$Q_n$  is bounded in $L^2(0,1;\mathbb{H}^2) \bigcap L^\infty(0,1;\mathbb{H}^1)$ and  $\partial_t Q_n$ is bounded in \\$L^{4/3}(0,1;\mathbb{L}^2) $. Therefore,  there exists a subsequence of $Q_n$ (equally denoted) and a limit function $\overline{Q}$ such that
$
 Q_n\rightarrow \overline{Q}$ strongly in $C^0([0,1]\times \overline\Om)\cap L^2(0,1;\mathbb{H}^1)
 $ and weakly in $L^2(0,1;\mathbb{H}^2)$.

In particular, $Q(t_n)=Q_n(0)\rightarrow \overline{Q}(0)$ in $C^0(\overline\Om)$, hence  $\overline{Q}(0)=Q_\infty$ (owing to (\ref{phiinf})). 
On the other hand,  $\partial_t Q_n$ converges weakly to $\partial_t \overline Q$ in $L^{4/3}(0,1;\mathbb{L}^2)$, hence taking limits in the variational formulation:
\[
\begin{array}{r}
(\partial_t Q_n, \widetilde{Q})+(
(\u_n \cdot \nabla) Q_n, \widetilde{Q}
) 
-(S(\nabla\u_n,Q_n),\widetilde{Q})\\\quad
-\varepsilon \, \gamma \, (\Delta Q_n,\widetilde{Q}) + \gamma \, (
f(Q_n),\widetilde{Q}
)=0.
\end{array}
\]
for all $\widetilde{Q}\in \mathbb{L}^2$, we have that 
$\partial_tQ_n\rightarrow 0$ in $L^{4/3}(0,1;\mathbb{L}^2)$. Therefore, 
 $\partial_t \overline Q= 0$ and $\overline Q(t)$ is a constant function of $\mathbb{H}^1$ for all $t \in [0,1]$, hence since $\overline{Q}(0)=Q_\infty$, we have 
\begin{equation}\label{barinfty}
\overline Q(t)=Q_\infty\in \mathbb{H}^1\quad\mbox{ for all } t\in [0,1].
\end{equation}
Finally, since $f(Q_n)$ converges weakly in $L^{\infty}(0,1;\mathbb{L}^2)$,
by taking limit as $n\rightarrow +\infty$ in the variational formulation  
$(H_n,\widetilde{Q})=\varepsilon \, (\nabla Q_n, \nabla
\widetilde{Q})+(f(Q_n),\widetilde{Q})$ for all $\widetilde{Q}\in \mathbb{H}^1$, 
 we deduce 
\[
\varepsilon \, (\nabla \overline{Q}, \nabla
\widetilde{Q})+(f(\overline{Q}),\widetilde{Q})=0, \quad \forall\,\widetilde{Q}\in \mathbb{H}^1, \mbox{ a.e. }t\in (0,1).
\]
Then, from (\ref{barinfty}),  $Q_\infty\in \mathbb{H}^1$ and $\varepsilon \, (\nabla Q_\infty, \nabla
\widetilde{Q})+(f(Q_\infty),\widetilde{Q})=0, \: \forall\,\widetilde{Q}\in \mathbb{H}^1, \mbox{ a.e. }t\in (0,1)$. 
Finally, by applying $\mathbb{H}^2$-regularity of the Poisson problem:
\[
\left\{\begin{array}{rcl}
- \varepsilon\, \Delta Q + Q &=& f(Q)+Q \quad \mbox{in $\Omega$,}\\[1mm]
\partial_\n Q \vert_{\Gamma} &=& {0} 
\end{array}\right.
\]
we deduce that $Q_\infty\in \mathbb{H}^2$, hence $Q_\infty\in {\cal S}$ and the proof is finished. 
\hfill $\square$

\

In the next theorem we apply the following Lojasiewicz-Simon's result that can be found in \cite{PRS}.
\begin{lemma}[Lojasiewicz-Simon inequality] \label{le:L-S2} 
 Let  ${Q}_*\in {\cal S}$ and $K>0$ fixed. Then, there exists positive constants $\beta_1$, $\beta_2$ and  $C$ and $\theta\in(0,1/2]$, 
 such that for all $Q\in \mathbb{H}^2$ with  $\Vert Q\Vert_{\mathbb{H}^1}\leq K$,  $\Vert Q-{Q}_*\Vert_{\mathbb{L}^2}\leq \beta_1$ and $\vert \mathcal{E}(Q)-\mathcal{E}({Q}_*)\vert\leq\beta_2$, it holds 
\[ 
 \vert \mathcal{E}(Q)-\mathcal{E}({Q}_*)\vert^{1-\theta}\leq C\,\Vert H\Vert_{\mathbb{H}^{-1}}
\] 
 where $H=H(Q)$ is defined in (\ref{doblecero}).
 \end{lemma}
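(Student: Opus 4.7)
My plan follows the classical Lojasiewicz-Simon scheme adapted to the $Q$-tensor energy. Since $F$ is polynomial, $\mathcal{E}(Q)=\int_\Omega\bigl(\frac{\varepsilon}{2}|\nabla Q|^2+F(Q)\bigr)\,d\x$ is real analytic on $\mathbb{H}^1$, and its $\mathbb{L}^2$-gradient coincides with the analytic Nemitski map $H(Q)=-\varepsilon\Delta Q+f(Q)$ on $\mathbb{H}^2_N:=\{Q\in\mathbb{H}^2:\partial_{\n}Q|_\Gamma=0\}$. The linearization at the critical point, $\mathcal{L}=-\varepsilon\Delta+f'(Q_*)$, is self-adjoint and, by elliptic regularity together with the compact embedding $\mathbb{H}^2\hookrightarrow\mathbb{L}^2$, is Fredholm of index zero from $\mathbb{H}^2_N$ onto $\mathbb{L}^2$. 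Its kernel $N$ is therefore finite-dimensional; let $P$ denote the $\mathbb{L}^2$-orthogonal projection onto $N$.

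Next I would perform a Lyapunov-Schmidt reduction. Writing $Q-Q_*=v+w$ with $v\in N$ and $w\in N^\perp$, the equation $(I-P)H(Q_*+v+w)=0$ is solved for $w=w(v)$ by the implicit function theorem, since the partial derivative in $w$ at $(0,0)$ equals $(I-P)\mathcal{L}$, an isomorphism between $N^\perp\cap\mathbb{H}^2_N$ and $N^\perp\cap\mathbb{L}^2$. Analyticity of $H$ transfers to the branch $v\mapsto w(v)$, with $w(0)=0$. The reduced energy $g(v):=\mathcal{E}(Q_*+v+w(v))$ is then real analytic on the finite-dimensional space $N$, and a direct computation exploiting $(I-P)H(Q_*+v+w(v))=0$ shows $\nabla g(v)=PH(Q_*+v+w(v))$. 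The classical finite-dimensional Lojasiewicz inequality for analytic functions then yields $\theta\in(0,1/2]$ and $C>0$ with $|g(v)-g(0)|^{1-\theta}\leq C|\nabla g(v)|$ in a neighborhood of $v=0$.

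To transfer this estimate to a general $Q$ satisfying the hypotheses, I would split $Q-Q_*=v+w(v)+r$ with $v=P(Q-Q_*)\in N$ and $r\in N^\perp$. Testing $(I-P)H(Q)$ against $r$ and invoking the coercivity of $(I-P)\mathcal{L}$ on $N^\perp$ bounds $\|r\|$ by $\|H(Q)\|_{\mathbb{H}^{-1}}$, while a Taylor expansion of $\mathcal{E}$ around $Q_*+v+w(v)$ yields $|\mathcal{E}(Q)-g(v)|\leq C\|r\|^2$. Combining these with the norm equivalence $\|PH(Q)\|_{\mathbb{L}^2}\leq C\|H(Q)\|_{\mathbb{H}^{-1}}$ (valid because $N$ is a finite-dimensional space of smooth functions) and the Lojasiewicz inequality for $g$ gives the claimed bound, with the smallness conditions $\|Q-Q_*\|_{\mathbb{L}^2}\leq\beta_1$ and $|\mathcal{E}(Q)-\mathcal{E}(Q_*)|\leq\beta_2$ ensuring one remains inside the neighborhood of validity. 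The main obstacle, in my view, is precisely this bookkeeping step: verifying that all error terms between $\mathcal{E}(Q)$ and $g(v)$, and between $\nabla g(v)$ and the full gradient $H(Q)$, are of order at least $\|H(Q)\|_{\mathbb{H}^{-1}}^2$ so that they can be absorbed on the right-hand side after raising to the power $1-\theta\geq 1/2$. The $\mathbb{H}^1$-bound on $Q$ is essential here, since it controls the cubic nonlinearity $f(Q)$ via the Sobolev embedding $\mathbb{H}^1\hookrightarrow\mathbb{L}^6$, guaranteeing the Nemitski map $H$ and its expansions around $Q_*$ remain analytic with uniform constants.
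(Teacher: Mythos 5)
The paper offers no proof of this lemma at all --- it is quoted directly from Petzeltov\'a, Rocca and Schimperna \cite{PRS} --- so your proposal is being measured against a citation rather than an internal argument. What you sketch is the classical Simon--Chill scheme (analyticity of $\mathcal{E}$ on $\mathbb{H}^1$, self-adjointness and Fredholmness of $\mathcal{L}=-\varepsilon\Delta+f'(Q_*)$ with Neumann conditions, Lyapunov--Schmidt reduction onto the finite-dimensional kernel, the finite-dimensional Lojasiewicz inequality for the reduced analytic function $g$, and the transfer estimates), and this is essentially the argument underlying the cited result, so the strategy is sound. If you want it to stand as a self-contained proof, two points need to be made precise. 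First, the admissible set is only an $\mathbb{L}^2$-ball around $Q_*$ intersected with $\{\Vert Q\Vert_{\mathbb{H}^1}\le K\}$, not an $\mathbb{H}^1$- or $\mathbb{H}^2$-neighborhood; the implicit-function step and the Taylor remainders of the cubic map $f$ must therefore be controlled by interpolation, e.g. $\Vert Q-Q_*\Vert_{\mathbb{H}^s}\le C K^{s}\,\beta_1^{1-s}$ for $s<1$ combined with the Sobolev embeddings in $3D$, and one must check that every error term between $\mathcal{E}(Q)$ and $g(v)$, and between $H(Q)$ and $\nabla g(v)$, is of order $\Vert H(Q)\Vert_{\mathbb{H}^{-1}}^{1/(1-\theta)}$ --- you correctly identify this bookkeeping as the main obstacle but do not carry it out. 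Second, the bound $\Vert r\Vert_{\mathbb{H}^1}\le C\Vert (I-P)H(Q)\Vert_{\mathbb{H}^{-1}}$ requires $(I-P)\mathcal{L}$ to be an isomorphism from $N^\perp\cap\mathbb{H}^1$ onto $N^\perp\cap\mathbb{H}^{-1}$, not merely from the Neumann space $\mathbb{H}^2_N$ onto $\mathbb{L}^2$; this holds by duality and self-adjointness but should be stated, since the whole point of putting the $\mathbb{H}^{-1}$-norm on the right-hand side is that $H(Q)$ is only small in that weak norm along trajectories. Neither issue invalidates the approach; they are precisely the places where the sketch would have to be filled in to match the result as proved in \cite{PRS}.
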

\begin{theorem} \label{the:second}
Assume that $\widetilde{\mathcal{E}}(t)$ belongs to the equivalence class of the energy function \\$\mathcal{E}(\u(t),Q(t))$, that is, 
$\widetilde{\mathcal{E}}(t)={\mathcal{E}}(\u(t),Q(t))$ almost everywhere $t\geq 0$.
Then, under the hypotheses of  Theorem~\ref{the:first}, there exists a unique limit $Q_\infty\in {\cal S}$ such that 
 $Q(t)\rightarrow Q_\infty$ in $\mathbb{H}^1$-weakly as $t\uparrow +\infty$, i.e.  $\omega(\u,Q)=\{(0,Q_\infty)\}$. 
\end{theorem}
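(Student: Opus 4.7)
The plan is to carry out the standard Lojasiewicz--Simon continuation argument, coupling Lemma~\ref{le:L-S2} with the differential energy inequality~(\ref{energy-eqs}). From Theorem~\ref{the:first} I fix a sequence $t_n \uparrow +\infty$ along which $\u(t_n)\to 0$ weakly in $\L^2$ and $Q(t_n)\to Q_\infty$ weakly in $\mathbb{H}^1$, with $(0,Q_\infty)\in\omega(\u,Q)$ and $\mathcal{E}_\mu(Q_\infty)=E_\infty$. Using the total-energy convergence $\mathcal{E}(\u(t_n),Q(t_n))\to E_\infty=\mathcal{E}_\mu(Q_\infty)$ together with $\tfrac12\|\u(t_n)\|_{\L^2}^2\ge 0$ and the weak lower-semicontinuity of $\mathcal{E}_\mu$ on $\mathbb{H}^1$ (the gradient part is convex; the polynomial $\int F_\mu(Q)$ passes to the limit thanks to the compact embedding $\mathbb{H}^1\hookrightarrow\mathbb{L}^4$), I first upgrade the convergences to strong ones: $\u(t_n)\to 0$ in $\L^2$ and $Q(t_n)\to Q_\infty$ in $\mathbb{H}^1$. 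Set $\psi(t):=\widetilde{\mathcal{E}}(t)-E_\infty\ge 0$, which is nonincreasing and tends to $0$ by Lemma~\ref{weakdef} and~(\ref{asenergys}).

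Applying Lemma~\ref{le:L-S2} at $Q_*=Q_\infty$ with $K$ the global $L^\infty(0,+\infty;\mathbb{H}^1)$-bound of $Q$ produces constants $\beta_1,\beta_2,C>0$ and a Lojasiewicz exponent $\theta\in(0,1/2]$. The key combined estimate to prove is: whenever $\|Q(t)-Q_\infty\|_{\mathbb{L}^2}<\beta_1$ and $|\mathcal{E}_\mu(Q(t))-\mathcal{E}_\mu(Q_\infty)|<\beta_2$,
\[
\psi(t)^{1-\theta}\le C\bigl(\|\nabla\u(t)\|_{\L^2}+\|H(t)\|_{\mathbb{L}^2}\bigr).
\]
The $Q$-contribution is Lemma~\ref{le:L-S2} directly (with $\|H\|_{\mathbb{H}^{-1}}\le C\|H\|_{\mathbb{L}^2}$). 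The kinetic contribution $(\tfrac12\|\u\|^2)^{1-\theta}$ is absorbed into $\|\nabla\u\|_{\L^2}$ via $\|\u\|^{2(1-\theta)}=\|\u\|^{1-2\theta}\cdot\|\u\|\le M^{1-2\theta}\cdot C\|\nabla\u\|_{\L^2}$, exploiting the restriction $\theta\le 1/2$, Poincar\'e's inequality, and the global bound $\|\u(t)\|_{\L^2}\le M$ supplied by the dissipative energy law. Combined with $-\dot\psi\ge\nu\|\nabla\u\|^2+\gamma\|H\|^2$ from~(\ref{energy-eqs}) and the elementary inequality $(a^2+b^2)/(a+b)\ge(a+b)/2$, I deduce
\[
-\frac{d}{dt}\psi(t)^\theta\ge C\bigl(\|\nabla\u(t)\|_{\L^2}+\|H(t)\|_{\mathbb{L}^2}\bigr)\qquad\text{a.e. on the L-S neighborhood.}
\]

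A standard continuation argument then closes the proof. Fix $\rho\in(0,\beta_1)$ and set $T_n:=\sup\{T>t_n:\|Q(s)-Q_\infty\|_{\mathbb{L}^2}<\rho\ \forall\,s\in[t_n,T]\}$. Integrating the previous inequality on $[t_n,T_n]$ yields $\int_{t_n}^{T_n}(\|\nabla\u\|+\|H\|)\,dt\le C\psi(t_n)^\theta$. To transfer this into a bound on $\|Q(T_n)-Q(t_n)\|$, I establish $\|\partial_tQ\|_{\mathbb{H}^{-1}}\le C(\|\nabla\u\|_{\L^2}+\|H\|_{\mathbb{L}^2})$ pointwise, starting from $\partial_tQ=-(\u\cdot\nabla)Q+S(\nabla\u,Q)-\gamma H$: use the divergence form $(\u\cdot\nabla)Q=\nabla\cdot(\u\otimes Q)$ from $\nabla\cdot\u=0$, the 3D embedding $\mathbb{L}^{6/5}\hookrightarrow\mathbb{H}^{-1}$, Gagliardo--Nirenberg $\|\u\|_{\L^3}\le C\|\u\|^{1/2}\|\nabla\u\|^{1/2}$, Young, Poincar\'e, and the uniform $\mathbb{L}^6$-bound on $Q$ from~(\ref{estrella2}). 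Therefore $\|Q(T_n)-Q(t_n)\|_{\mathbb{H}^{-1}}\le C\psi(t_n)^\theta\to 0$, and the interpolation $\|\cdot\|_{\mathbb{L}^2}\le\|\cdot\|_{\mathbb{H}^{-1}}^{1/2}\|\cdot\|_{\mathbb{H}^1}^{1/2}$ together with the uniform $\mathbb{H}^1$-bound gives $\|Q(T_n)-Q(t_n)\|_{\mathbb{L}^2}\to 0$. For $n$ sufficiently large, continuity of $Q\in C([0,+\infty);\mathbb{L}^2)$ forces $T_n=+\infty$; otherwise $\|Q(T_n)-Q_\infty\|_{\mathbb{L}^2}=\rho$ would contradict $\|Q(T_n)-Q(t_n)\|_{\mathbb{L}^2}+\|Q(t_n)-Q_\infty\|_{\mathbb{L}^2}<\rho$. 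Hence $Q(t)\to Q_\infty$ strongly in $\mathbb{L}^2$ and, by the uniform $\mathbb{H}^1$-bound, weakly in $\mathbb{H}^1$, so $\omega(\u,Q)=\{(0,Q_\infty)\}$.

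The main obstacle is that Lemma~\ref{le:L-S2} is a Lojasiewicz--Simon inequality for the $Q$-part of the free energy only, while the differential energy inequality~(\ref{energy-eqs}) governs the coupled total energy $\mathcal{E}(\u,Q)$. Incorporating the kinetic term $\tfrac12\|\u\|_{\L^2}^2$ with the \emph{same} exponent $1-\theta$ on the left-hand side is the delicate point, and it is precisely the restriction $\theta\in(0,1/2]$ in Lemma~\ref{le:L-S2} that allows Poincar\'e together with the uniform kinetic bound to swallow this contribution into $\|\nabla\u\|_{\L^2}$. A secondary technical point is the limited 3D regularity of $\partial_tQ$ coming from the convection and stretching terms, which forces the estimates to be performed in $\mathbb{H}^{-1}$ and the $\mathbb{L}^2$-convergence to be recovered by interpolation against the global $\mathbb{H}^1$-bound on $Q$.
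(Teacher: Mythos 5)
Your argument is correct and follows essentially the same route as the paper: the combined Lojasiewicz--Simon estimate absorbing the kinetic energy via $\theta\le 1/2$, Poincar\'e and the uniform $\L^2$-bound on $\u$; the resulting differential inequality for $(\widetilde{\mathcal{E}}(t)-E_\infty)^\theta$; the pointwise bound $\Vert\partial_t Q\Vert_{\mathbb{H}^{-1}}\le C(\Vert\nabla\u\Vert_{\L^2}+\Vert H\Vert_{\mathbb{L}^2})$; and the continuation/contradiction argument yielding convergence in $\mathbb{H}^{-1}$ and then in $\mathbb{H}^1$-weak. The only point you skip (and which the paper dispatches as a preliminary reduction) is the degenerate case where $\widetilde{\mathcal{E}}(\tilde t)=E_\infty$ at some finite $\tilde t$, which must be excluded before differentiating $\psi^\theta$ since $\psi^{\theta-1}$ is then undefined; in that case the trajectory is stationary for $t\ge\tilde t$ and the conclusion is trivial.
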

%
%
%
%
{\bf Proof: } 
Let $Q_\infty\in{\cal S} $ such that $(0,Q_\infty)\in \omega(\u,Q)$, i.e.~there exists $t_n\uparrow +\infty$ such that $\u(t_n)\rightarrow 0$ weakly in $\L^2$ and $Q(t_n)\rightarrow Q_\infty$ weakly in  $\mathbb{H}^1$ (and strongly in $\mathbb{L}^2$).
\medskip

Without loss of generality, it can be assumed that $\widetilde{\mathcal{E}}(t)> {\mathcal{E}}_\mu(Q_\infty)(=E_\infty)$ for all $t> 0$, because otherwise, if it exists some $\widetilde{t}>0$ such that $\widetilde{\mathcal{E}}(\widetilde{t})= E_\infty$, then the energy inequality $(\ref{energyeqint})$   implies
\[
\widetilde{\mathcal{E}}(t)= E_\infty,\quad \forall\, t\geq\widetilde{t},
\]
\[
 \Vert \nabla \u(t) \Vert_{\mathbb{L}^2}^2 =0 \quad \hbox{and} \quad \Vert {H}(t) \Vert_{\mathbb{L}^2}^2=0, \quad \forall\, t\geq\widetilde{t}.
\]
 Therefore, $\u(t)=0$ and $H(t)=0$ for all $t\geq\widetilde{t}$, and  by using the $Q$-equation (\ref{modelo-q}), $\partial_t
 Q(t)=0$,  hence $Q(t)=Q_\infty$ for all $t\geq\widetilde{t}$. In this setting the convergence of the whole $Q$-trajectory towards $Q_\infty$ is trivial.

Therefore, we can assume that $\widetilde{\mathcal{E}}(t)> E_\infty$ for all $t\geq 0$. Then, the proof will be  divided into three steps.
\medskip

\noindent{\bf Step 1:} {\sl Assuming that there exists $t_1>0$ such that 
\[
\Vert Q(t)-Q_\infty\Vert_{\mathbb{L}^2}\leq\beta_1\:\mbox{ and }\:\vert {\mathcal{E}}_\mu(Q(t))-{\mathcal{E}}_\mu({Q}_\infty)\vert\leq\beta_2
\]
 for all $ t\geq t_1\geq 0$, where $\beta_1>0, \beta_2>0$ are the constants appearing in Lemma~\ref{le:L-S2} (of Lojasiewicz-Simon's type), then the following inequalities hold:
\medskip
\begin{equation}\label{stabs}
\begin{array}{l}
\fra{d}{dt}\Big( (\widetilde{\mathcal{E}}(t)-E_\infty)^\theta\Big)
+{C}\,{\theta}\:(\Vert \grad\u(t)\Vert_{{\bf L}^2}+
\Vert H(t)\Vert_{\mathbb{L}^2})\leq 0,
\end{array}
\end{equation} 
$\mbox{a.e.}~t\in (t_1,\infty)
$.
\begin{equation}\label{stab2s}
\jnt_{t_{1}}^{t_2}\Vert\partial_t Q\Vert_{\mathbb{H}^{-1}}\leq\fra{C}{\theta}(\widetilde{\mathcal{E}}(t_{1})-E_\infty)^\theta,
\end{equation}
 for all $t_2\in (t_1,\infty)$, 
where $\theta\in (0,1/2]$ is the  constant appearing in Lemma~\ref{le:L-S2}. 
}

Since $E_\infty$ is constant, we can rewrite  the energy inequality (\ref{energy-eqs}) as 
\[
  \fra{d}{dt} (\widetilde{\mathcal{E}}(t)-E_\infty) + C\left( \Vert \nabla \u(t) \Vert_{\mathbb{L}^2}^2 + \Vert {H} (t)\Vert_{\mathbb{L}^2}^2 \right)\leq 0,
  \] 
 almost everywhere $t\ge 0.$
By taking into account that 
\[
 \displaystyle \Vert \nabla{\u}(t)\Vert_{\mathbb{L}^2}^2  +
\Vert H(t)\Vert_{\mathbb{ L}^2}^2\geq  \frac{1}{2}\left(\Vert \nabla{\u}(t)\Vert_{\mathbb{L}^2} +
\Vert H(t)\Vert_{\mathbb{ L}^2}\right)^2
\]
 and the inequality 
\[
\displaystyle  \frac{1}{2} (\Vert\grad\u(t)\Vert_{\mathbb{L}^2}+\Vert H(t)\Vert_{\mathbb{ L}^2}) \ge C(\Vert\u(t)\Vert_{{\bf L}^2}+\Vert H(t)\Vert_{\mathbb{H}^{-1}}),
\]
 we obtain
\[ 
\fra{d}{dt} (\widetilde{\mathcal{E}}(t)-E_\infty) + C (\Vert\u(t)\Vert_{{\bf L}^2}+\Vert H(t)\Vert_{\mathbb{H}^{-1}})
\left( \Vert \nabla \u(t) \Vert_{\mathbb{L}^2} + \Vert {H} (t)\Vert_{\mathbb{L}^2} \right)
\leq 0, \quad\mbox{ a.e.}~t\ge 0.
\]
By using this expression and the time derivative of the (strictly positive) function 
$(\widetilde{\mathcal{E}}(t)-E_\infty)^\theta ,
$
 we obtain a.e.~$t\ge 0$ that
\begin{equation}\label{estim-1s}
\begin{array}{l}
 \fra{d}{dt} \left((\widetilde{\mathcal{E}}(t)-E_\infty)^\theta\right)
  \\
 \qquad  
 + \theta(\widetilde{\mathcal{E}}(t)-E_\infty)^{\theta-1}
 C
  (\Vert\u(t)\Vert_{{\bf L}^2}+\Vert H(t)\Vert_{\mathbb{H}^{-1}})
\left( \Vert \nabla \u(t) \Vert_{\mathbb{L}^2} + \Vert {H} (t)\Vert_{\mathbb{L}^2} \right)
\leq 0.
\end{array}
\end{equation}
On the other hand, 
by taking into account that 
$\vert {\mathcal{E}}_k(\u(t))\vert=\fra{1}{2}\Vert\u(t)\Vert_{{\bf L}^2}^2\:$ and   $\Vert\u(t)\Vert_{{\bf L}^2}\le K$, we have that
\[
\vert {\mathcal{E}}_k(\u(t))\vert^{1-\theta}=\fra{1}{2^{1-\theta}}\Vert\u(t)\Vert_{{\bf L}^2}^{2(1-\theta)}
=
\fra{1}{2^{1-\theta}}\Vert\u(t)\Vert_{{\bf L}^2}^{1-2\theta}\Vert\u(t)\Vert_{{\bf L}^2}
\leq C\Vert\u(t)\Vert_{{\bf L}^2}\quad a.e.~t\ge 0.
\]
This estimate together  the Lojasiewicz-Simon inequality 
\[ 
\vert {\mathcal{E}}_\mu(Q(t))- E_\infty \vert^{1-\theta}\leq C \Vert H\Vert_{\mathbb{H}^{-1}},  \quad\mbox{ a.e.}~t\ge  t_1 .
\] 
give
\[
\begin{array}{c}
( {\mathcal{E}}(u(t),Q(t))-E_\infty)^{1-\theta}\leq
\vert {\mathcal{E}}_k(\u(t)) \vert^{1-\theta}+\vert \displaystyle  {\mathcal{E}}_\mu(Q(t))- E_\infty \vert^{1-\theta}\\
[2mm]
\leq C (\Vert \u(t)\Vert_{{\bf L}^2}+
  \Vert H(t)\Vert_{\mathbb{H}^{-1}})^{} \quad \mbox{ a.e.}~t\ge  t_1.
\end{array}
\]
Therefore, 
\begin{equation}\label{interm-L-S}
\quad\:
( {\mathcal{E}}(u(t),Q(t))-E_\infty)^{\theta-1}(\Vert \u(t)\Vert_{{\bf L}^2}+  \Vert H(t)\Vert_{\mathbb{H}^{-1}})^{} \geq
C  
  \end{equation}
almost every where $t\ge  t_1$.
By applying (\ref{interm-L-S}) in (\ref{estim-1s}),
\[
 \fra{d}{dt} \left(({\mathcal{E}}(u(t),Q(t))-E_\infty)^\theta\right) +{C}\,{\theta}\:(\Vert \grad\u(t)\Vert_{\mathbb{L}^2}+
\Vert H(t)\Vert_{\mathbb{L}^2})\leq 0,\quad \mbox{ a.e.}~t\ge  t_1
\]
hence $(\ref{stabs})$ is proved.

Here,  the hypothesis ${\mathcal{E}}(u(t),Q(t))=\widetilde{\mathcal{E}}(t)$ for almost every $t$ is a key point. In particular, this hypothesis implies that the integral and differential versions of the energy law (\ref{energyeqint}) and (\ref{energy-eqs}) are satisfied by ${\mathcal{E}}(u(t),Q(t))$ a.e.\ in time. In fact, energy law (\ref{energy-eqs}), changing $\widetilde{\mathcal{E}}(t)$ by  ${\mathcal{E}}(u(t),Q(t))$, is the crucial hypothesis imposed in Remark 2.4 of \cite{PRS}. 

 Fixed any $t_2\in (t_1,+\infty)$, taking into account that $({\mathcal{E}}(\u({t_2}),Q({t_2}))-E_\infty)^\theta>0$ and, integrating $(\ref{stabs})$ into $[t_1, t_2]$
   we have
\begin{equation}\label{estim-2s}
\begin{array}{c}
\theta\, C\jnt_{t_1}^{t_2}(\Vert \grad\u(t)\Vert_{\mathbb{L}^2}+
\Vert H(t)\Vert_{\mathbb{L}^2}) dt
  \leq ({\mathcal{E}}(\u({t_1}),Q({t_1}))-E_\infty)^\theta.
  \end{array}
\end{equation}
From the equation  (\ref{doblecero-a}), by using the weak regularity $Q\in L^\infty((0,+\infty)\times \Om)$, then 
\[
\Vert\partial_t Q(t)\Vert_{\mathbb{H}^{-1}}\leq C(\Vert \grad\u(t)\Vert_{\mathbb{L}^2}+
\Vert H(t)\Vert_{\mathbb{L}^2}) 
\qquad a.e.~t\ge 0.
\]
By using this inequality in  (\ref{estim-2s}), then (\ref{stab2s}) is attained. 
 
 \medskip

\noindent{\bf Step 2:}  {\sl There exists a sufficiently large $n_0$ such that  $\Vert
Q(t)-Q_\infty\Vert_{\mathbb{L}^2}\leq\beta_1$ and $\vert {\mathcal{E}}_\mu(Q(t))-{\mathcal{E}}_\mu(Q_*)\vert\leq\beta_2$ for all $t\geq t_{n_0}$ ($\beta_1, \beta_2$  given in Lemma~\ref{le:L-S2})}.
\medskip

Since $Q(t_n)\rightarrow Q_\infty$ strongly  in $\mathbb{L}^2$ and ${\mathcal{E}}(\u(t_n),Q(t_n)) \searrow E_\infty={\mathcal{E}}_\mu(Q_\infty)$ in $\mathbb{R}$ (see (\ref{asenergys})), then  for any $\delta\in (0,\beta_1)$, there exists an integer $N(\delta)$ such that, for all $n\geq N(\delta)$,
\begin{equation}\label{epsis}
\begin{array}{c}
\Vert Q(t_n)-Q_\infty\Vert_{\mathbb{L}^2}\leq\delta\quad
\mbox{ and }\quad
\fra{1}{\theta}({\mathcal{E}}_\mu(Q(t_n))- E_\infty)^\theta\leq\delta.
\end{array}
\end{equation}
For each $n\geq N(\delta)$, we define 
\[
\overline{t}_n:=\sup\{t: t>t_n, \,\Vert Q(s)-Q_\infty \Vert_{\mathbb{L}^2}<\beta_1\quad\forall s\in [t_n,t)\}.
\]
It suffices to prove that  $\overline{t}_{n_0}=+\infty$ for some  $n_0$. Assume by contradiction that  $t_n<\overline{t}_n<+\infty$ for all $n$, hence $\Vert Q(\overline{t}_n)-Q_\infty \Vert_{\mathbb{L}^2}=\beta_1$ and $\Vert Q({t})-Q_\infty \Vert_{\mathbb{L}^2}<\beta_1$ for all $t\in [t_n, \overline t_n)$.   By applying Step 1 for all $t\in [t_n,\overline{t}_n]$, from $(\ref{stab2s})$ and $(\ref{epsis})$ we obtain, 
\[
\jnt_{t_{n}}^{\overline{t}_n}\Vert\partial_tQ\Vert_{\mathbb{H}^{-1}}\leq C\delta, \quad \forall\,n\geq N(\delta).
\]
Therefore, 
\[
\begin{array}{c}
\Vert Q(\overline{t}_n)-Q_\infty\Vert_{\mathbb{H}^{-1}}\leq
\Vert Q(t_n)-Q_\infty\Vert_{\mathbb{H}^{-1}}
+\jnt_{t_{n}}^{\overline{t}_n}\Vert\partial_tQ\Vert_{\mathbb{H}^{-1}}\leq (1+C)\delta,
\end{array}
\]
which implies that $\lim_{n\rightarrow +\infty}\Vert Q(\overline{t}_n)-Q_\infty\Vert_{\mathbb{H}^{-1}}=0$. 

On the other hand, $ Q(\overline{t}_n)$ is bounded in ${\mathbb{H}^1}$. Indeed, from (\ref{asenergys}), $\widetilde{\mathcal{E}}(\u(\overline{t}_n),Q(\overline{t}_n))$  is bounded in $\mathbb{R}$, therefore in particular 
 \[
 \jnt_\Omega{\mathcal{E}}_\mu(Q(\overline{t}_n))\,dx
 =\jnt\displaystyle\Big(\frac{\varepsilon}{2}\vert
\nabla Q (\overline{t}_n)\vert^2 + F_\mu(Q(\overline{t}_n)) \Big)\,dx
\]
 is bounded.
But, since $ F_\mu(Q)$ is bounded in $L^\infty(\mathbb{L}^1) $, then $\grad Q(\overline{t}_n)$ is bounded in $\mathbb{L}^2(\Om)$, therefore $ Q(\overline{t}_n)$ is bounded in $\mathbb{H}^1$.
  
  Consequently, $ Q(\overline{t}_n)$    is relatively compact in ${\mathbb{L}^2}$, hence there exists a subsequence of $ Q(\overline{t}_n)$, which is still denoted as $ Q(\overline{t}_n)$, that converges to $Q_\infty$ in ${\mathbb{L}^2}$-strong. Hence  $\Vert Q(\overline{t}_n)-Q_\infty\Vert_{{\mathbb{L}^2}}<\beta_1$ for a sufficiently large $n$, which contradicts the definition of $\overline{t}_n$. 
\medskip

\noindent{\bf Step 3:} {\sl There exists a unique $Q_\infty$ such that $Q(t) \to Q_\infty$ weakly in $\mathbb{H}^{1}$ as $t\uparrow +\infty$.}
\medskip

By using Steps 1 and 2,  (\ref{stab2s})  can be applied, for all $t_1,t_0: t_1>t_0\geq t_{n_0}$, hence
\[
\Vert Q(t_1)-Q(t_0)\Vert_{\mathbb{H}^{-1}}\leq
\jnt_{t_0}^{t_1}\Vert\partial_tQ\Vert_{\mathbb{H}^{-1}} \to 0,\:\: \hbox{as $t_0,t_1\to +\infty$.}
\]
Therefore, 
$(Q(t))_{t\geq {t_{n_0}}}$ is a Cauchy sequence in $\mathbb{H}^{-1}$ as $t\uparrow +\infty$, hence, 
there exists a unique $Q_\infty\in \mathbb{H}^{-1}$ such that $Q(t)\to Q_\infty$ in $\mathbb{H}^{-1}$ as $t\uparrow +\infty$. Finally, the   convergence in $\mathbb{H}^1$-weak by sequences of $Q(t)$ proved in Theorem~\ref{the:first}, yields  to $Q(t)\to Q_\infty$ in $\mathbb{H}^1$-weak, and the proof is finished.

\end{document}